\newtheorem{Theorem}{Theorem}[section]
\newtheorem{Prop}[Theorem]{Proposition}
\theoremstyle{definition}
\newtheorem{Rmks}[Theorem]{Remarks}
\newcommand{\Fq}[1][]{\mathbb{F}_{q^{#1}}}
\newcommand{\Z}{\mathbb{Z}}
\def\ie{\textit{i.e.}}
\title[Local cyclicity of isogeny classes of abelian varieties]{Local cyclicity of isogeny classes of abelian varieties defined over finite fields}
\author{Alejandro J. Giangreco-Maidana}
\address{Aix Marseille Universit\'e, CNRS, Centrale Marseille, I2M UMR 7373, 13453 Marseille, France}
\email{ajgiangreco@gmail.com}
\begin{document}

\keywords{group of rational points, cyclic, local, abelian variety, finite field}
\subjclass{Primary 11G10, 14G15, 14K15}

\begin{abstract}
For a prime number $\ell$, an isogeny class $\mathcal{A}$ of abelian varieties is called $\ell$\emph{-cyclic} if every variety in $\mathcal{A}$ have a cyclic $\ell$-part of its group of rational points. More generally, for a finite set of prime numbers $\mathcal{S}$, $\mathcal{A}$ is said to be $\mathcal{S}$\emph{-cyclic} if it is $\ell$-cyclic for every $\ell\in\mathcal{S}$. We give lower and upper bounds on the fraction of $\mathcal{S}$-cyclic $g$-dimensional isogeny classes of abelian varieties defined over the finite field $\Fq$, when $q$ tends to infinity. 
\end{abstract}

\maketitle

\section{Introduction}
The group of rational points $A(k)$ of an abelian variety $A$ defined over a finite field $k=\Fq$ is a finite abelian group, and it has theoretical and practical interests. More precisely, the group structure of $A(k)$ and some statistics are research topics in the literature. 

The structure of all possible groups for elliptic curves defined over finite fields was independently discovered in \cite{SCHOOF1987183}, \cite{ruck1987note},\cite{tsfasman1985group} and \cite{voloch1988note}. For higher dimensions, Rybakov gives in \cite{Rybakov2010} a very explicit description of all possible groups of rational points of an abelian variety in a given isogeny class, his result is formulated in terms of the characteristic polynomial of the isogeny class.

Cyclic subgroups of the group of rational points $A(k)$ of an abelian variety $A$ defined over a finite field $k$ are suitable for multiple applications, in particular for cryptography, where the Discrete Logarithm Problem is exploited. Elliptic curves with cyclic groups of rational points are of special interest, and many algorithms chooses it randomly. Thus, knowing such statistics can be useful. Statistics on the fraction of isomorphism classes of elliptic curves that are cyclic were explored in \cite{VLADUT199913} and \cite{VLADUT1999354}, and for higher dimensions, this was explored in \cite{ajgiangreco2018cavff}.

More precisely, in \cite{ajgiangreco2018cavff} we studied the cyclicity of isogeny classes of abelian varieties defined over finite fields. In particular, if $\mathcal{S}$ is a finite set of prime numbers, from these results we can easily deduce a nice criterion to know if an isogeny class $\mathcal{A}$ is $\mathcal{S}$\emph{-cyclic}, \ie, for every variety $A\in\mathcal{A}$ and every $\ell\in \mathcal{S}$, the $\ell$-part $A(k)_\ell$ of $A(k)$ is cyclic. This criterion is based on the characteristic polynomial of the isogeny class $\mathcal{A}$. Clearly, this result is meaningful only if $A(k)_\ell$ is nontrivial. As a consequence of Tate theorem (\cite{tate1966}), the cardinality of $A(k)$ is an invariant of the isogeny class, and thus, the property of $A(k)_\ell$ to be or not trivial is an invariant of the isogeny class as well.

Using the strategy proposed in \cite{DIPIPPO1998426}, Holden proved in \cite{Holden2004} that the fraction of $k$-isogeny classes with nontrivial $\ell$-part $A(k)_\ell$ is asymptotically $1/\ell$, when $q$ tends to infinity.

In this paper, we combine both results (from \cite{ajgiangreco2018cavff} and \cite{Holden2004}) to provide lower and upper (asymptotic) bounds, in Theorem \ref{th:S-cyclicity}, on the fraction of $k$-isogeny classes of abelian varieties that are $\mathcal{S}$-cyclic, among those $k$-isogeny classes with a nontrivial $\ell$-part, for at least one $\ell\in\mathcal{S}$.

The rest of this paper is organized as follows: in Section \ref{sec:Preliminaries} we briefly recall some general facts about abelian varieties over finite fields and we state our result more precisely; in Section \ref{sec:proof} we prove Theorem \ref{th:S-cyclicity}; finally, in Section \ref{sec:appendix}(Appendix) we prove some propositions used in the proof of Theorem \ref{th:S-cyclicity}.

\section{Preliminaries and Statement of the results}\label{sec:Preliminaries}
Let establish the results more precisely. From the Honda-Tate theorem (\cite{tate1966,honda1968}), every isogeny class $\mathcal{A}$ of abelian varieties defined over a finite field is uniquely determined by its Weil polynomial $f_\mathcal{A}(t)\in \Z [t]$, the characteristic polynomial of its Frobenius endomorphism acting on its Tate module. The cardinality of the group of rational points of a variety $A$ in $\mathcal{A}$ equals $f_\mathcal{A}(1)$, thus is an invariant of the isogeny class.
Let $q=p^r$ be a power of a prime and denote by $\mathcal{I}(q,g)$ the set of $g$-dimensional isogeny classes $\mathcal{A}$ of abelian varieties defined over the finite field $\Fq$.
 
For a fixed and finite set of prime numbers $\mathcal{S}$ (even containing $p$), we say that an isogeny class $\mathcal{A}$ is $\mathcal{S}$-cyclic if all its varieties have a cyclic $\ell$-part of its group of rational points for all $\ell\in\mathcal{S}$. 
For more details on the general theory of abelian varieties see for example \cite{mumford1970abelian}, and for precise results over finite fields, see \cite{Waterhouse1969}.

Following the notation in \cite{Holden2004}, we define the following cardinalities
\begin{itemize}
\item $I_\mathcal{S}(q,g)=\#\{ \mathcal{A}\in  \mathcal{I}(q,g): \ell|f_\mathcal{A}(1)$ for some prime $\ell\in\mathcal{S}\}$ (those with $(A(k))_\ell$ non trivial) and,
\item $I_\mathcal{S}^n(q,g)=\#\{ \mathcal{A}\in  \mathcal{I}(q,g): \ell|f_\mathcal{A}(1)$ with $\mathcal{A}$ non $\mathcal{S}$-cyclic $\}$. 
\end{itemize}

Then, our result provides the proportion of $\mathcal{S}$-cyclic isogeny classes:
\begin{Theorem}\label{th:S-cyclicity}
There exist a $q_0=q_0(\mathcal{S})$, such that if $q\geq q_0$, we have
\begin{align}
 1-\frac{1-\sigma_2(\mathcal{S})}{1-\sigma_1(\mathcal{S})} \leq \frac{ I_\mathcal{S}(q,g) - I_\mathcal{S}^n(q,g)}{I_\mathcal{S}(q,g)} \leq 1-\frac{1-\sigma_3(\mathcal{S})}{1-\sigma_1(\mathcal{S})},
\end{align}
where
\begin{align*}
\sigma_i(\mathcal{S})=\prod_{\ell \in\mathcal{S}} \left(  1-\frac{1}{\ell^i} \right).
\end{align*}
\end{Theorem}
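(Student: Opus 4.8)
\medskip
\noindent\emph{Proof proposal.} The idea is to bound $I_\mathcal{S}(q,g)$ and $I^n_\mathcal{S}(q,g)$ by counting Weil polynomials subject to congruence conditions modulo fixed powers of the primes of $\mathcal{S}$: the cyclicity criterion of \cite{ajgiangreco2018cavff} produces those conditions, the DiPippo--Stark type estimates of \cite{DIPIPPO1998426,Holden2004} do the counting, and an inclusion--exclusion over $\mathcal{S}$ combines them.

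Fix $\ell\in\mathcal{S}$. From the criterion of \cite{ajgiangreco2018cavff} I would extract a \emph{necessary} and a \emph{sufficient} congruence condition for $\mathcal{A}$ to fail to be $\ell$-cyclic. Necessary: since $\#A(\mathbb{F}_q)=f_\mathcal{A}(1)$ by Tate's theorem, if $v_\ell(f_\mathcal{A}(1))\le 1$ then $A(\mathbb{F}_q)_\ell$ has order $1$ or $\ell$ for every $A\in\mathcal{A}$, hence is cyclic; so $\mathcal{A}$ not $\ell$-cyclic forces $\ell^2\mid f_\mathcal{A}(1)$. Sufficient: if moreover $(t-1)^2\mid f_\mathcal{A}(t)\bmod\ell$ then, for $g\ge2$, the criterion of \cite{ajgiangreco2018cavff} (which rests on Rybakov's classification \cite{Rybakov2010}) provides an $A\in\mathcal{A}$ with $(\mathbb{Z}/\ell)^2\hookrightarrow A(\mathbb{F}_q)$, so $\mathcal{A}$ is not $\ell$-cyclic. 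As $(t-1)^2\mid f_\mathcal{A}(t)\bmod\ell$ is equivalent to $f_\mathcal{A}(1)\equiv f_\mathcal{A}'(1)\equiv0\bmod\ell$, we obtain, inside $\mathcal{I}(q,g)$,
\[
\{\ell^2\mid f_\mathcal{A}(1),\ f_\mathcal{A}'(1)\equiv0\bmod\ell\}\ \subseteq\ \{\mathcal{A}\text{ not }\ell\text{-cyclic}\}\ \subseteq\ \{\ell^2\mid f_\mathcal{A}(1)\}.
\]
Taking complements and then unions over $\ell\in\mathcal{S}$, $I^n_\mathcal{S}(q,g)$ is squeezed between two congruence--defined cardinalities, while $I_\mathcal{S}(q,g)=\#\{\mathcal{A}:\exists\,\ell\in\mathcal{S},\ \ell\mid f_\mathcal{A}(1)\}$ is itself of this form.

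The heart of the proof is the counting step, to be carried out in the Appendix (Section~\ref{sec:appendix}) by adapting \cite{DIPIPPO1998426,Holden2004}: for a fixed modulus $M$ supported on $\mathcal{S}$ and a fixed admissible family of residue classes for the coefficients, the number of $\mathcal{A}\in\mathcal{I}(q,g)$ whose Weil polynomial lies in that family is the natural density times $\#\mathcal{I}(q,g)$, with error $o(\#\mathcal{I}(q,g))$ as $q\to\infty$ ($g$ and $M$ fixed). The inputs are that, in the parametrisation of $\mathcal{I}(q,g)$ by the first $g$ coefficients $a_1,\dots,a_g$ of $f_\mathcal{A}$, the value $f_\mathcal{A}(1)$ is an affine--linear function whose coefficient of $a_g$ is $1$ (so it equidistributes modulo every power of $\ell$), and that for $g\ge2$ the forms $f_\mathcal{A}(1)$ and $f_\mathcal{A}'(1)$ are linearly independent modulo $\ell$ unless $\ell\mid q-1$ --- in which case $(t-1)^2\mid f_\mathcal{A}(t)\bmod\ell$ reduces to $\ell\mid f_\mathcal{A}(1)$ and the sufficient condition becomes $\ell^2\mid f_\mathcal{A}(1)$; either way its density lies in $[\ell^{-3},\ell^{-2}]$. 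Feeding these into the count and invoking the Chinese Remainder Theorem across the distinct primes of $\mathcal{S}$ gives, as $q\to\infty$ and with $\sigma_i=\sigma_i(\mathcal{S})$,
\[
I_\mathcal{S}(q,g)\sim(1-\sigma_1)\#\mathcal{I}(q,g),\qquad \#\{\mathcal{A}:\exists\,\ell\in\mathcal{S},\ \ell^2\mid f_\mathcal{A}(1)\}\sim(1-\sigma_2)\#\mathcal{I}(q,g),
\]
and the number of $\mathcal{A}$ satisfying, for some $\ell\in\mathcal{S}$, both $\ell^2\mid f_\mathcal{A}(1)$ and $f_\mathcal{A}'(1)\equiv0\bmod\ell$ is at least $(1-\sigma_3+o(1))\#\mathcal{I}(q,g)$, each defining density being $\ge\ell^{-3}$.

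To finish, since $I^n_\mathcal{S}(q,g)\subseteq I_\mathcal{S}(q,g)$ we may write $\frac{I_\mathcal{S}(q,g)-I^n_\mathcal{S}(q,g)}{I_\mathcal{S}(q,g)}=1-\frac{I^n_\mathcal{S}(q,g)}{I_\mathcal{S}(q,g)}$; the sandwich for $I^n_\mathcal{S}$ together with the three estimates above gives $\tfrac{1-\sigma_3}{1-\sigma_1}\le\tfrac{I^n_\mathcal{S}(q,g)}{I_\mathcal{S}(q,g)}\le\tfrac{1-\sigma_2}{1-\sigma_1}$ up to an $o(1)$, and taking $q_0=q_0(\mathcal{S})$ large enough that the error terms become negligible yields the two inequalities of Theorem~\ref{th:S-cyclicity}. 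The step I expect to be the main obstacle is the counting in the Appendix: turning the DiPippo--Stark volume heuristic into a genuine count of isogeny classes with prescribed coefficient residues, with an error provably of smaller order than $\#\mathcal{I}(q,g)\asymp q^{g(g+1)/4}$ and uniform in $q$; together with the structural input that for $g\ge2$, $\ell^2\mid f_\mathcal{A}(1)$ with $(t-1)^2\mid f_\mathcal{A}(t)\bmod\ell$ really forces non $\ell$-cyclicity and holds with density at least $\ell^{-3}$ for \emph{every} $q$. Reading the two congruence conditions off \cite{ajgiangreco2018cavff} and the Chinese--remainder bookkeeping are, by contrast, routine.
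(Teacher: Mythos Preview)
Your proposal is correct and follows essentially the same route as the paper: partition Weil polynomials by residues modulo $F^2=\prod_{\ell\in\mathcal{S}}\ell^2$, apply the DiPippo--Howe/Holden lattice count (the paper's Propositions~\ref{prop:lattice} and~\ref{prop:Im_F}) to each residue class, and then count residue classes via inclusion--exclusion and the $\ell\mid q-1$ versus $\ell\nmid q-1$ dichotomy (the paper's Proposition~\ref{prop:numbSol}). The only cosmetic difference is that you sandwich ``not $\ell$-cyclic'' between the necessary condition $\ell^2\mid f(1)$ and the sufficient condition $\ell^2\mid f(1),\ \ell\mid f'(1)$, whereas the paper quotes the latter as the \emph{exact} criterion $\ell\mid(\widehat{f(1)},f'(1))$ from \cite{ajgiangreco2018cavff} and then bounds its solution count between $\ell^{2g-3}$ and $\ell^{2g-2}$; both yield the same $\sigma_2,\sigma_3$ bounds.
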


\begin{Rmks}
$ $
\begin{enumerate}
\item If we consider the sets $\mathcal{S}(N)=\{\ell$ prime $: \ell\leq N\}$, then when $N\rightarrow\infty$, $\sigma_i(\mathcal{S}(N))\rightarrow 1/\zeta(i)$, where $\zeta$ is the Riemann zeta function. Thus, since $\zeta(1)=\infty$, the bounds can be simplified
\begin{align*}
0.6 \approx\frac{1}{\zeta(2)}  \leq \frac{ I_{\mathcal{S}(N)}(q,g) - I_{\mathcal{S}(N)}^n(q,g)}{I_{\mathcal{S}(N)}(q,g)} \leq  \frac{1}{\zeta(3)} \approx 0.833.
\end{align*}
Computational results show that these limits start in $0.5$ and $0.75$ (when $N=2$) and establish quickly at $0.57$ and $0.815$ (around $N=557$).
\item When we take $\mathcal{S}=\{\ell\}$, these bounds are $1-1/\ell$ and $1-1/\ell^2$. Moreover, it is not hard to deduce from the proof of Theorem \ref{th:S-cyclicity}, that we have exact limits
\begin{align*}
\lim_{q\rightarrow\infty, \ell|q-1} \frac{I_\mathcal{S}(q,g) - I_\mathcal{S}^n(q,g)}{I_\mathcal{S}(q,g)}=\frac{\ell -1}{\ell},\\
\lim_{q\rightarrow\infty, \ell\nmid q-1} \frac{I_\mathcal{S}(q,g)- I_\mathcal{S}^n(q,g)}{I_\mathcal{S}(q,g)}=\frac{\ell^2 -1}{\ell^2}.
\end{align*}
Considering the result in \cite{Holden2004}, this means that the fraction of $g$-dimensional $\ell$-cyclic isogeny classes (with nontrivial $\ell$-part) among all $g$-dimensional isogeny classes is $\frac{1}{\ell}\left(\frac{\ell -1}{\ell}\right)$ or $\frac{1}{\ell}\left(\frac{\ell^2 -1}{\ell^2}\right)$, when $q$ tends to infinity among such $q$ with $\ell|(q-1)$ or $\ell\nmid (q-1)$, respectively. 
\item From \cite{Holden2004} we can see that the fraction of isogeny classes with non-trivial $\ell$-part for at least one $\ell\in\mathcal{S}$ is $1-\sigma_1(\mathcal{S})$. \end{enumerate}
\end{Rmks}

\section{The proof}\label{sec:proof}
Here we prove Theorem \ref{th:S-cyclicity}. Also, for the clarity of the text, the propositions used for the proofs of theorems are proved in the Appendix.

Our procedure is based on the method of counting isogeny classes of abelian varieties introduced by DiPippo and Howe in \cite{DIPIPPO1998426}, after having done some partitions in particular lattices, similar as in \cite{Holden2004}.

We will consider \emph{rectilinear lattices}, by which we mean lattices that have a rectilinear fundamental domain with edges parallel to the coordinate axes. The \emph{covolume} of a lattice $\Lambda$ is the volume of a fundamental domain R of $\Lambda$, and the \emph{mesh} of $\Lambda$ is the length of the longest edge of R.

Every isogeny class $\mathcal{A}$ is defined by its Weil polynomial $f_\mathcal{A}$, which has the general form
\begin{equation}
f_\mathcal{A}(t)=t^{2g}+a_1 t^{2g-1}+\dots + a_g t^g + a_{g-1} q t^{g-1}+\dots a_1 q^{g-1} t + q^{g} \in \mathbb{Z}[t],
\end{equation}
and for simplicity, we write it as $f_{q,(a_1,\dots,a_{g})}(t)$. Not every polynomial of the previous form defines an isogeny class, but we know ``almost all'' polynomials that are Weil polynomials of some isogeny class, and we will clarify it in the next paragraph. For a polynomial of the previous form $f_{q,\mathbf{a}}(t)$, we associate another polynomial
\begin{align*}
h_{\mathbf{b}}(t)=f_{q,\mathbf{a}}(\sqrt{q} t)/ q^g=(t^{2g}+1)+b_1(t^{2g-1}+t) + \dots + b_{g-1}(t^{2g+1}+t^{2g-1})+b_g t^g \in \mathbb{R}[t].
\end{align*}

We consider three types of lattices. The first one, $\Lambda_q$, is generated by the vectors $q^{-i/2}e_i$, where $e_1,\dots,e_g$ denote the standard basis vectors of $\mathbb{R}^g$. From DiPippo and Howe \cite{DIPIPPO1998426}, it follows that if an isogeny class has $f_{q,\mathbf{a}}(t)$ as its Weil polynomial, then $(a_1 q^{-1/2},\dots, a_g q^{-g/2})\in \Lambda_q \cap V_g$, where $V_g$ is the set of vectors $\mathbf{b}=(b_1,\dots,b_g) \in \mathbb{R}^g$ such that the associated $h_{\mathbf{b}}(t)$ has all its roots on the unit circle and all its real roots occur with even multiplicity. The second lattice, $\Lambda'_q$, is generated by the vectors $q^{-1/2}e_1,\dots, q^{-(g-1)/2}e_{g-1}$ and $pq^{-g/2}e_g$. Then, there is a bijection between isogeny classes of ordinary varieties with Weil polynomial $f_{q,\mathbf{a}}(t)$, and $(\Lambda_q\setminus \Lambda'_q) \cap V_g$, given by $\mathbf{a}\mapsto (a_1 q^{-1/2},\dots, a_g q^{-g/2})$. Finally, the third lattice, $\Lambda''_q$, is generated by the vectors $q^{-1/2}e_1,\dots, q^{-(g-1)/2}e_{g-1}$ and $sq^{-g/2}e_g$, where $s$ is the smallest power of $p$ such that $q$ divides $s^2$. Then every non-ordinary isogeny class has its Weil polynomial $f_{q,\mathbf{a}}(t)$ such that $(a_1 q^{-1/2},\dots, a_g q^{-g/2})\in \Lambda''_q \cap V_g$. In order to measure the sizes of these kind of sets (we will refine these lattices later), we will use Proposition $2.3.1$ of \cite{DIPIPPO1998426} in a generalized form:

\begin{Prop}[see \cite{DIPIPPO1998426}, and \cite{Holden2004} for its generalized form] \label{prop:lattice}
Let $g>0$ be an integer and let $\Lambda \subset \mathbb{R}^g$ be a rectilinear lattice (possibly shifted) with mesh $d$ at most $D$. Then we have
\begin{equation*}
\left| \#(\Lambda \cap V_g) - \frac{\text{volume } V_g}{\text{covolume } \Lambda} \right| \leq c(g,D)\frac{d}{\text{covolume } \Lambda}
\end{equation*}
for some constant $c(g,D)$ depending only on $g$ and $D$ which can be explicitly computed. (We will not need the explicit computation in this paper.)
\end{Prop}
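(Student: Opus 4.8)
The plan is to prove the estimate by reducing the count of lattice points in $V_g$ to a volume computation, with the discrepancy controlled by the mesh. Concretely, I would partition $\mathbb{R}^g$ into the translated copies of a fundamental parallelepiped $R$ of $\Lambda$: since $\Lambda$ is rectilinear, $R$ has edges parallel to the coordinate axes, and the translates $R + \lambda$ for $\lambda \in \Lambda$ tile $\mathbb{R}^g$ without overlap. To each lattice point $\lambda \in \Lambda$ I would associate one such cell, say $R + \lambda$, so that $\#(\Lambda \cap V_g)$ equals the number of cells whose associated lattice point lies in $V_g$. The idea is then to compare this count, multiplied by the covolume $\mathrm{covol}(\Lambda) = \mathrm{vol}(R)$, against the true volume of $V_g$.

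The main step is to bound the symmetric difference between $V_g$ and the union $U$ of all cells $R+\lambda$ with $\lambda \in V_g$. First I would observe that $\#(\Lambda \cap V_g)\cdot \mathrm{covol}(\Lambda) = \mathrm{vol}(U)$, so that
\begin{equation*}
\left| \#(\Lambda \cap V_g)\,\mathrm{covol}(\Lambda) - \mathrm{vol}(V_g) \right| \;\leq\; \mathrm{vol}\big(U \,\triangle\, V_g\big).
\end{equation*}
Every point of $U \triangle V_g$ lies in a cell $R+\lambda$ that meets both $V_g$ and its complement, hence lies within distance $d$ (the mesh, the longest edge of $R$) of the topological boundary $\partial V_g$. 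Therefore $U \triangle V_g$ is contained in the $d$-neighborhood of $\partial V_g$, and its volume is at most the volume of that neighborhood. The crucial geometric input is that $V_g$ is a bounded region cut out by the condition that the roots of $h_{\mathbf{b}}$ lie on the unit circle; its boundary $\partial V_g$ is a piecewise-smooth hypersurface of finite $(g-1)$-dimensional measure, so the volume of its $d$-neighborhood is bounded by $C(g,D)\,d$ for a constant depending only on $g$ and on the bound $D$ on the mesh (the dependence on $D$ entering because $V_g$ sits inside a box whose size, relative to the cells, is governed by $D$). Dividing the displayed inequality through by $\mathrm{covol}(\Lambda)$ then yields the claimed bound with $c(g,D) = C(g,D)$.

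The hard part will be establishing that the $d$-neighborhood of $\partial V_g$ has volume $O(d)$ uniformly, i.e.\ producing the explicit constant $c(g,D)$ from the geometry of $V_g$. This requires knowing that $V_g$ has finite surface area and controlled geometric complexity: one must verify that $\partial V_g$ decomposes into finitely many smooth pieces (arising from the discriminant locus where two roots of $h_{\mathbf{b}}$ collide on the unit circle, and from the constraint boundaries) and bound the $(g-1)$-measure of each piece in terms of $g$. This is precisely the content imported from Proposition $2.3.1$ of \cite{DIPIPPO1998426}; since the statement allows $\Lambda$ to be shifted and only assumes the mesh is at most $D$, I would check that the neighborhood argument is translation-invariant (a shift of $\Lambda$ merely relabels the cells and does not change $\mathrm{vol}(U \triangle V_g)$) and that replacing the exact mesh $d$ by its upper bound $D$ inside the constant, while keeping the linear factor $d$ explicit, gives exactly the asserted form. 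The passage to the generalized form used by Holden amounts to allowing the fundamental domain to be a general rectilinear box rather than the specific one in \cite{DIPIPPO1998426}, which the argument above already accommodates since we never used the specific edge lengths beyond the mesh bound.
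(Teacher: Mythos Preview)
The paper does not prove this proposition at all: it is quoted verbatim from DiPippo--Howe (Proposition~2.3.1 of \cite{DIPIPPO1998426}) in the generalized form given by Holden \cite{Holden2004}, and is used as a black box in the proofs of Propositions~\ref{prop:Im_F} and~\ref{prop:numbSol}. There is therefore nothing in the paper to compare your argument against.

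That said, your outline is the standard Gauss-type lattice-point argument and is essentially the approach taken in \cite{DIPIPPO1998426}: tile by fundamental cells, bound the discrepancy by the volume of a $d$-neighborhood of $\partial V_g$, and reduce to a surface-area estimate for $\partial V_g$. You correctly identify that the only nontrivial input is the bound on the $(g-1)$-dimensional measure of $\partial V_g$, and that this is exactly what DiPippo and Howe supply. One small point: the dependence of the constant on $D$ rather than just on $g$ is not, as you suggest, because ``$V_g$ sits inside a box whose size relative to the cells is governed by $D$'' --- $V_g$ is a fixed region independent of the lattice. The $D$-dependence enters because for large mesh the tubular-neighborhood volume bound $\mathrm{vol}(N_d(\partial V_g)) \leq C\,d$ can fail (once $d$ exceeds the inradius-type scale of $V_g$ the neighborhood swallows the whole region), so one needs a uniform bound only over $d\le D$. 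Other than this minor mislocation of where $D$ enters, your sketch is sound.
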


Denote by $F=F(\mathcal{S})$ the product of primes contained in $\mathcal{S}$. 
From Theorem 2.2 in \cite{ajgiangreco2018cavff}, we have that $\mathcal{A}$ is not cyclic if and only if $(\widehat{f(1)},f'(1))>1$, where $\widehat{z}$ is the integer $z$ divided by the product of its different prime divisors. Locally, $\mathcal{A}$ is not $\ell$-cyclic if and only if $\ell |(\widehat{f(1)},f'(1))$.
Then, we have a partition of the form
\begin{gather}
I_{\mathcal{S}}(q,g)=\sum I_\mathbf{m}(q,g), \label{eq:partS}\\
I_{\mathcal{S}}^n(q,g)=\sum I_\mathbf{m}(q,g), \label{eq:partSn}
\end{gather}
where the summations on equations \ref{eq:partS} and \ref{eq:partSn} are taken, respectively, over
\begin{itemize}
\item all $\mathbf{m}\in (\mathbb{Z}/F^2\mathbb{Z})^g$, such that $f_{q,\mathbf{m}}(1)$ is not invertible in $\Z/F^2\Z$, \ie, such that $A(\Fq)_\ell$ is not trivial for some $\ell\in \mathcal{S}$, and
\item all $\mathbf{m}\in (\mathbb{Z}/F^2\mathbb{Z})^g$, such that $\ell |(\widehat{f(1)},f'(1))$ for some $\ell \in \mathcal{S}$, \ie, $A(\Fq)$ have some non cyclic component;
\end{itemize}
and where $I_\mathbf{m}(q,g)$ is the number of isogeny classes of $g$-dimensional abelian varieties over $\Fq$ with Weil polynomial $f_{q,\mathbf{a}}$ for some $\mathbf{a} \in \mathbb{Z}^g$ such that $\mathbf{a} \equiv \mathbf{m} \pmod{F^2}$. The number of terms on the right side of equations above will be important, and we will see it in a proposition below.

Now we refine the latices mentioned above, and we will calculate their sizes using Proposition \ref{prop:lattice}. 
If $\Lambda_\mathbf{m}$ is the lattice generated by the vectors $F^2 q^{-i/2}e_i$ and then shifted by $\sum_i m_i q^{-i/2}e_i$, we consider the lattices $\Lambda'_\mathbf{m}=\Lambda_\mathbf{m}\cap \Lambda'_q$ and $\Lambda''_\mathbf{m}=\Lambda_\mathbf{m}\cap \Lambda''_q$. Set $G=g(g+1)/4$, then $\Lambda_\mathbf{m}$ has covolume $F^{2g} q^{-G}$ and mesh $F^2 q^{-1/2}$; $\Lambda'_\mathbf{m}$ has covolume $F^{2g} p q^{-G}$ and mesh $F^2 q^{-1/2}$ unless $g=2$ and $q=p$ in which case it has mesh $F^2$; and $\Lambda''_\mathbf{m}$ has covolume $F^{2g} s q^{-G}$ and mesh at most $F^2$.

From Proposition \ref{prop:lattice} we get (see Appendix for the proof)
\begin{Prop}\label{prop:Im_F}
\begin{gather*}
L(q,g,F) F^{-2g} \leq I_\mathbf{m}(q,g) \leq R(q,g,F) F^{-2g},
\end{gather*}
with
\begin{align*}
L(q,g,F)=\left(v_g r(q) q^G - 2F^2 c(g,F^2)q^{G-1/2}\right) \quad\text{and} \\ 
R(q,g,F)=\left(v_g r(q) q^{G} + (v_g + 3F^2 c(g,F^2))q^{G-1/2} \right),
\end{align*}
and where $v_g$ is the volume of $V_g$ and $r(q)=1-1/p$.
\end{Prop}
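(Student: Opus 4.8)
The plan is to count $I_\mathbf{m}(q,g)$ directly via Proposition~\ref{prop:lattice} applied to the three refined lattices $\Lambda_\mathbf{m}$, $\Lambda'_\mathbf{m}$, $\Lambda''_\mathbf{m}$, exactly as in DiPippo--Howe and Holden, and then to assemble the contributions according to whether the isogeny classes involved are ordinary or not. The starting point is the bijection recalled in the text: ordinary isogeny classes with a fixed residue $\mathbf{a}\equiv\mathbf{m}\pmod{F^2}$ correspond to $(\Lambda_\mathbf{m}\setminus\Lambda'_\mathbf{m})\cap V_g$, and all non-ordinary ones inject into $\Lambda''_\mathbf{m}\cap V_g$. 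So I would write
\begin{align*}
I_\mathbf{m}(q,g) = \#\bigl((\Lambda_\mathbf{m}\setminus\Lambda'_\mathbf{m})\cap V_g\bigr) + (\text{non-ordinary count}),
\end{align*}
bound the first summand below by $\#(\Lambda_\mathbf{m}\cap V_g) - \#(\Lambda'_\mathbf{m}\cap V_g)$ and above by $\#(\Lambda_\mathbf{m}\cap V_g)$ (dropping the correction), and bound the non-ordinary count above by $\#(\Lambda''_\mathbf{m}\cap V_g)$ and below by $0$.

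Next I would feed the covolumes and meshes listed just before the statement into Proposition~\ref{prop:lattice}. For $\Lambda_\mathbf{m}$ this gives $\#(\Lambda_\mathbf{m}\cap V_g) = v_g F^{-2g} q^G + \theta$ with $|\theta| \le c(g,F^2) F^2 q^{-1/2} \cdot F^{-2g} q^G = c(g,F^2) F^{-2g} F^2 q^{G-1/2}$; similarly $\#(\Lambda'_\mathbf{m}\cap V_g) = v_g F^{-2g} p^{-1} q^G + O(F^{-2g}F^2 q^{G-1/2})$ and $\#(\Lambda''_\mathbf{m}\cap V_g) = v_g F^{-2g} s^{-1} q^G + O(F^{-2g}F^2 q^{G-1/2})$. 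Combining: the lower bound on $I_\mathbf{m}(q,g)$ comes from keeping $\#(\Lambda_\mathbf{m}\cap V_g)$ at its minimum and subtracting $\#(\Lambda'_\mathbf{m}\cap V_g)$ at its maximum, which produces the main term $v_g(1-1/p)F^{-2g}q^G = v_g r(q) F^{-2g} q^G$ and an error of the shape $-2 F^2 c(g,F^2) F^{-2g} q^{G-1/2}$ after bounding the two mesh-error terms (noting $1/p \le 1$ to fold the $\Lambda'$ error into the same constant). The upper bound keeps $\#(\Lambda_\mathbf{m}\cap V_g)$ at its maximum and adds $\#(\Lambda''_\mathbf{m}\cap V_g)$ at its maximum; here the non-ordinary main term $v_g F^{-2g} s^{-1} q^G$ must be absorbed — since $s \ge \sqrt q$, this term is $\le v_g F^{-2g} q^{G-1/2}$, which is why an extra $v_g q^{G-1/2}$ appears in $R(q,g,F)$ alongside the $3F^2 c(g,F^2) q^{G-1/2}$ collecting the three mesh errors.

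A few bookkeeping points need care. One must check that the mesh of $\Lambda_\mathbf{m}$, namely $F^2 q^{-1/2}$, is bounded by a constant $D$ depending only on $F$ (hence only on $\mathcal{S}$) so that $c(g,D) = c(g,F^2)$ is legitimate — this holds once $q \ge 1$, and in the exceptional case $g=2$, $q=p$ the mesh of $\Lambda'_\mathbf{m}$ is $F^2$, still bounded by $F^2$, so the same constant works; similarly $\Lambda''_\mathbf{m}$ has mesh at most $F^2$. One should also confirm the covolume of $\Lambda_\mathbf{m}$ is indeed $F^{2g}q^{-G}$: the generators $F^2 q^{-i/2}e_i$ give covolume $\prod_{i=1}^g F^2 q^{-i/2} = F^{2g} q^{-\frac12\sum i} = F^{2g} q^{-g(g+1)/4} = F^{2g}q^{-G}$, and shifting does not change covolume; the $\Lambda'$ and $\Lambda''$ covolumes pick up the extra factors $p$ and $s$ from the last generator. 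The main obstacle is not any single estimate but getting the direction of every inequality right while juggling the set difference $\Lambda_\mathbf{m}\setminus\Lambda'_\mathbf{m}$ together with the separate non-ordinary contribution, and making sure all $q^{G-1/2}$-scale errors are collected into the stated constants $2F^2 c(g,F^2)$ and $v_g + 3F^2 c(g,F^2)$ without double counting; writing $r(q)=1-1/p$ cleanly isolates the ordinary/non-ordinary split in the main term. Once the signs are tracked carefully, dividing the displayed inequality by $F^{2g}$ yields exactly the asserted bounds $L(q,g,F)F^{-2g} \le I_\mathbf{m}(q,g) \le R(q,g,F)F^{-2g}$.
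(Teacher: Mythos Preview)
Your overall strategy matches the paper's, but there is a genuine gap in the upper bound. You propose to bound the ordinary contribution $\#((\Lambda_\mathbf{m}\setminus\Lambda'_\mathbf{m})\cap V_g)$ above by $\#(\Lambda_\mathbf{m}\cap V_g)$, ``dropping the correction''. That is too crude: the resulting main term is $v_g F^{-2g} q^G$, not $v_g r(q) F^{-2g} q^G$, and the discrepancy $v_g F^{-2g} q^G/p$ is of order $q^G$ whenever $q$ is a proper power of $p$, so it cannot be absorbed into a $q^{G-1/2}$ error. With your stated strategy only two lattices enter the upper bound, so your later reference to ``three mesh errors'' and to the factor $r(q)$ is inconsistent with what you actually set up; the bound you would obtain is strictly weaker than the claimed $R(q,g,F)F^{-2g}$ and would also spoil the application, since then $L/R\to r(q)\neq 1$.

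The fix is the one the paper uses: since $\Lambda'_\mathbf{m}=\Lambda_\mathbf{m}\cap\Lambda'_q\subset\Lambda_\mathbf{m}$, one has the \emph{equality}
\[
\#\bigl((\Lambda_\mathbf{m}\setminus\Lambda'_\mathbf{m})\cap V_g\bigr)=\#(\Lambda_\mathbf{m}\cap V_g)-\#(\Lambda'_\mathbf{m}\cap V_g),
\]
so for the upper bound one keeps this subtraction and bounds $\#(\Lambda'_\mathbf{m}\cap V_g)$ from \emph{below} via Proposition~\ref{prop:lattice}. That produces the main term $v_g(1-1/p)F^{-2g}q^G=v_g r(q)F^{-2g}q^G$ and a third mesh error (from $\Lambda'_\mathbf{m}$), which together with the $\Lambda_\mathbf{m}$ and $\Lambda''_\mathbf{m}$ errors and the bound $1+1/p+1\le 3$ gives exactly the $3F^2 c(g,F^2)q^{G-1/2}$ you quote. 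Your lower bound argument and the bookkeeping on covolumes, meshes, and the use of $s\ge\sqrt{q}$ are fine and agree with the paper.
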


Note that $L/R$ tends to  $1$ when $q$ tends to infinity. Also, we can show that (see Appendix for the proof)

\begin{Prop}\label{prop:numbSol}
$ $
\begin{itemize}
\item the number of terms on the RHS in equation \ref{eq:partS} is 
\[
F^{2g} (1-\sigma_1(\mathcal{S})), \quad\text{and},
\] 
\item the number of terms on the RHS in equation \ref{eq:partSn} is between \[
F^{2g}(1-\sigma_3(\mathcal{S})) \qquad \text{and} \qquad F^{2g}(1-\sigma_2(\mathcal{S})).
\]
\end{itemize}
\end{Prop}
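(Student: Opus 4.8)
The plan is to reduce everything to counting residues modulo $\ell^{2}$, one prime $\ell\in\mathcal{S}$ at a time, and then multiply. By the Chinese Remainder Theorem $(\mathbb{Z}/F^{2}\mathbb{Z})^{g}\cong\prod_{\ell\in\mathcal{S}}(\mathbb{Z}/\ell^{2}\mathbb{Z})^{g}$, and both conditions cutting out the supports of \ref{eq:partS} and \ref{eq:partSn} are, prime by prime, conditions on the reduction of $\mathbf{m}$ modulo $\ell^{2}$ only; so each cardinality becomes $F^{2g}$ minus a product over $\ell\in\mathcal{S}$ of local counts. The one elementary fact that makes every local count explicit is that in
\[
f_{q,\mathbf{a}}(1)=(1+q^{g})+a_{g}+\sum_{i=1}^{g-1}a_{i}(1+q^{g-i})
\]
the coefficient of $a_{g}$ is $1$, a unit modulo any prime power; hence for each fixed $(a_{1},\dots,a_{g-1})$ the assignment $a_{g}\mapsto f_{q,\mathbf{a}}(1)$ is a bijection of $\mathbb{Z}/\ell^{2}\mathbb{Z}$ onto itself (and likewise of $\mathbb{Z}/\ell\mathbb{Z}$).

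For the first bullet, $f_{q,\mathbf{m}}(1)$ is invertible modulo $F^{2}$ exactly when $\ell\nmid f_{q,\mathbf{m}}(1)$ for every $\ell\in\mathcal{S}$. For a single $\ell$ the bijection above shows that $\ell^{2}-\ell$ of the $\ell^{2}$ choices of $a_{g}$ make $f_{q,\mathbf{a}}(1)$ a unit mod $\ell^{2}$, so there are $\ell^{2(g-1)}(\ell^{2}-\ell)=\ell^{2g}(1-1/\ell)$ such $\mathbf{a}$; multiplying over $\ell\in\mathcal{S}$ and subtracting from $F^{2g}$ yields $F^{2g}-\prod_{\ell\in\mathcal{S}}\ell^{2g}(1-1/\ell)=F^{2g}(1-\sigma_{1}(\mathcal{S}))$.

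For the second bullet, I would first observe that $\ell\mid\widehat{z}$ exactly when $\ell^{2}\mid z$, so that $\ell\mid(\widehat{f(1)},f'(1))$ is equivalent to the pair of conditions $\ell^{2}\mid f_{q,\mathbf{a}}(1)$ and $\ell\mid f'_{q,\mathbf{a}}(1)$, both depending only on $\mathbf{a}$ mod $\ell^{2}$. Imposing $\ell^{2}\mid f_{q,\mathbf{a}}(1)$ solves uniquely for $a_{g}$ mod $\ell^{2}$ as an affine function of $(a_{1},\dots,a_{g-1})$; substituting this value into $f'_{q,\mathbf{a}}(1)$ and reducing mod $\ell$ gives, after a short computation,
\[
f'_{q,\mathbf{a}}(1)\equiv g(1-q^{g})+\sum_{i=1}^{g-1}a_{i}\,(g-i)(1-q^{g-i})\pmod{\ell}.
\]
The coefficient of $a_{g-1}$ is $1-q$, so two cases arise. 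If $\ell\nmid q-1$, this is a nondegenerate affine form in $(a_{1},\dots,a_{g-1})$ mod $\ell$, which cuts the count to $\ell^{g-2}$ residues mod $\ell$, hence $\ell^{2g-3}$ residues mod $\ell^{2}$ (with $a_{g}$ then forced). If $\ell\mid q-1$, then $q^{j}\equiv1\pmod{\ell}$ for all $j$, the form above vanishes identically, the condition on the derivative is automatic, and we are left with $\ell^{2(g-1)}=\ell^{2g-2}$ residues. In either case the number of $\mathbf{a}$ mod $\ell^{2}$ with $\ell\nmid(\widehat{f(1)},f'(1))$ lies between $\ell^{2g}(1-1/\ell^{2})$ and $\ell^{2g}(1-1/\ell^{3})$; multiplying over $\ell\in\mathcal{S}$ and subtracting from $F^{2g}$ shows the number of terms in \ref{eq:partSn} lies between $F^{2g}(1-\sigma_{3}(\mathcal{S}))$ and $F^{2g}(1-\sigma_{2}(\mathcal{S}))$.

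The step I expect to be delicate is the elimination of $a_{g}$ in the second bullet and the resulting dichotomy: one must verify that once $\ell^{2}\mid f_{q,\mathbf{a}}(1)$ is used to remove $a_{g}$, the residual condition on $(a_{1},\dots,a_{g-1})$ is \emph{always} either vacuous or a single nondegenerate linear congruence mod $\ell$, with no intermediate behaviour. This rests on the coefficient $1-q$ of $a_{g-1}$ being a unit mod $\ell$ precisely when $\ell\nmid q-1$, together with the collapse $q^{j}\equiv1\pmod{\ell}$ in the complementary case; it uses $g\geq2$ so that $a_{g-1}$ is one of the free variables (the case $g=1$ behaving slightly differently). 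The constant $q_{0}$ of Theorem~\ref{th:S-cyclicity} is not needed for this proposition; it enters only through Proposition~\ref{prop:Im_F}.
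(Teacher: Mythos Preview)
Your argument is correct and follows essentially the same route as the paper: reduce via CRT to a single prime $\ell$, observe that the condition $\ell\mid(\widehat{f(1)},f'(1))$ amounts to $\ell^{2}\mid f(1)$ and $\ell\mid f'(1)$, and split into the cases $\ell\mid q-1$ and $\ell\nmid q-1$ to obtain local counts $\ell^{2g-2}$ and $\ell^{2g-3}$ respectively, then recombine. The only cosmetic differences are that the paper treats the last two coordinates simultaneously via a $2\times 2$ system with determinant $q-1$ (rather than eliminating $a_{g}$ first and reading off the coefficient $1-q$ of $a_{g-1}$), and invokes inclusion--exclusion where you take complements and multiply; your explicit acknowledgement of the $g\ge 2$ hypothesis is a point the paper leaves implicit.
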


Thus we get
\begin{align}
(1-\sigma_1(\mathcal{S})) L(q,g,F) \leq I_\mathcal{S}(q,g) &\leq (1-\sigma_1(\mathcal{S})) R(q,g,F), \qquad\text{and} \label{eq:S_bounds}\\
(1-\sigma_3(\mathcal{S})) L(q,g,F) \leq I_\mathcal{S}^n(q,g) &\leq (1-\sigma_2(\mathcal{S})) R(q,g,F), \label{eq:nS_bounds}
\end{align}

then, when $q$ tends to infinity
\begin{gather}
\frac{1-\sigma_3(\mathcal{S})}{1-\sigma_1(\mathcal{S})} \leq \frac{I_{\mathcal{S}}^n(q,g)}{I_{\mathcal{S}}(q,g)} \leq \frac{1-\sigma_2(\mathcal{S})}{1-\sigma_1(\mathcal{S})}
\end{gather}

which gives the result. Theorem \ref{th:S-cyclicity} is thus proved.

\section{Appendix}\label{sec:appendix}
\begin{proof}[Proof of Proposition \ref{prop:Im_F}]
Here we prove the $\mathcal{S}$-case. For simplicity, we omit the subscript $\mathbf{m}$ from $I_\mathbf{m}(q,g)$, $\Lambda_\mathbf{m}$, $\Lambda'_\mathbf{m}$ and $\Lambda''_\mathbf{m}$, and we write $v$ instead of $v_g$. Concerning these three lattices, we summarize its values in the following table

\begin{center}
  \begin{tabular}{ | c | c | c | c | c |}
    \hline
    & covolume & $d$ & $D$ & $d/covolume$ \\ \hline
    $\Lambda$ & $F^{2g} q^{-G}$ & $F^2 q^{-1/2}$ & $F^2$ & $F^{2-2g}q^{G-1/2}$ \\ \hline
    $\Lambda'$ & $F^{2g} p q^{-G}$ & $F^2 q^{-1/2}$ & $F^2$ & $F^{2-2g}q^{G-1/2}/p$ \\ \hline
    $\Lambda''$ & $F^{2g} s q^{-G}$ & $F^2$ & $F^2$ & $F^{2-2g}q^{G}/s$\\
    \hline
  \end{tabular}
\end{center}

We apply Proposition \ref{prop:lattice}, which in the general form says
\begin{align*}
\frac{\text{volume } V_g}{\text{covolume } \Lambda} -c(g,D)\frac{d}{\text{covolume } \Lambda} &\leq \#(\Lambda \cap V_g) \leq \frac{\text{volume } V_g}{\text{covolume } \Lambda} +c(g,D)\frac{d}{\text{covolume } \Lambda},
\end{align*}

thus, for every lattice we get, respectively
\begin{align*}
vF^{-2g}q^G - c(g,F^2)F^{2-2g}q^{G-1/2} &\leq \#(\Lambda \cap V_g) \leq vF^{-2g}q^G + c(g,F^2)F^{2-2g}q^{G-1/2} \\
vF^{-2g}q^G/p - c(g,F^2)F^{2-2g}q^{G-1/2}/p &\leq \#(\Lambda' \cap V_g) \leq vF^{-2g}q^G/p + c(g,F^2)F^{2-2g}q^{G-1/2}/p \\
vF^{-2g}q^G/s - c(g,F^2)F^{2-2g}q^G/s &\leq \#(\Lambda'' \cap V_g) \leq vF^{-2g}q^G/s + c(g,F^2)F^{2-2g}q^G/s 
\end{align*}
From the description of DiPippo and Howe,
\[
\#( (\Lambda \setminus \Lambda')\cap V_g) \leq I(q,g)\leq \#((\Lambda\cup\Lambda''\setminus \Lambda')\cap V_g)
\]
Thus, 
\begin{align*}
I(q,g) &\geq vF^{-2g}q^G - c(g,F^2)F^{2-2g}q^{G-1/2}  - \left( vF^{-2g}q^G/p + c(g,F^2)F^{2-2g}q^{G-1/2}/p \right)\\
&=vF^{-2g}q^G r(q) - c(g,F^2)F^{2-2g}q^{G-1/2}(1+1/p)\\
&\geq vF^{-2g}q^G r(q) - 2c(g,F^2)F^{2-2g}q^{G-1/2}.
\end{align*}
Also (knowing that $s$ is roughly $\sqrt{q}$ and $\sqrt{q}\leq s$),
\begin{align*}
I(q,g) &\leq  vF^{-2g}q^G + c(g,F^2)F^{2-2g}q^{G-1/2} - \left( vF^{-2g}q^G/p - c(g,F^2)F^{2-2g}q^{G-1/2}/p  \right) + \\ &+ vF^{-2g}q^G/s + c(g,F^2)F^{2-2g}q^G/s \\
&=v F^{-2g} r(q) q^G + vF^{-2g}q^G/s +c(g,F^2) F^{2-2g} \left( q^{G-1/2} +q^{G-1/2}/p + q^G/s\right) \\
&\leq v F^{-2g} r(q) q^G + vF^{-2g}q^{G-1/2} +c(g,F^2) F^{2-2g} \left( q^{G-1/2} +q^{G-1/2}/p + q^{G-1/2}\right) \\
&= v F^{-2g} r(q) q^G + v F^{-2g}q^{G-1/2} +c(g,F^2) F^{2-2g} q^{G-1/2} \left( 1 + 1/p + 1\right) \\
&\leq v F^{-2g} r(q) q^G + (v+3 F^2 c(g,F^2)) F^{-2g} q^{G-1/2}
\end{align*}
Thus, the proposition is proved.
\end{proof}

\begin{proof}[Proof of the Proposition \ref{prop:numbSol}]
Knowing the number of terms of these equations is equivalent to knowing the number of solutions of certain systems of equations. 

The first item follows from the fact that the number of solutions is $F^{2g-2}(F^2-\varphi(F^2))$, where $\varphi$ is the Euler's totient function, since for every $\mathbf{n}\in (\mathbb{Z}/F^2\mathbb{Z})^{g-1}$, there are exactly $F^2-\varphi(F^2)$ of $x$ satisfying that $f_{q,(\mathbf{n},x)}(1)$ is not invertible in $\Z/F^2\Z$.  

For the second item , it means, there exist a prime $\ell\in \mathcal{S}$ such that
\begin{align*}
f_{q, \mathbf{m}}(1)\equiv i\ell^2 \pmod{F^2}\\
f'_{q, \mathbf{m}}(1)\equiv j\ell \pmod{F^2}
\end{align*}
We can study locally by the well known isomorphism
\begin{align*}
(\Z/F^2\Z)^{g} \cong \prod_{\ell|F} (\Z/\ell^2\Z)^{g}
\end{align*}
In this case, the number of solution is between $\ell^{2g-3}$ and $\ell^{2g-2}$ (see below for the proof). Then the result follows from the inclusion-exclusion formula, since we have a solution of the problem if and only if we have a solution on at least one component. Now, we study the problem locally, and we split it in two cases:\\
When $\ell\nmid q-1$, for every $\mathbf{n}\in (\mathbb{Z}/\ell^2\mathbb{Z})^{g-2}$, the system
\begin{align*}
f_{q, (\mathbf{n},x,y)}(1) &\equiv 0 \pmod{\ell^2}\\
f'_{q,(\mathbf{n},x,y)}(1) &\equiv i\ell \pmod{\ell^2}, i=0,1,\dots,\ell-1
\end{align*}
has exactly $\ell$ solutions (exactly one for every $i$ on the second equation), since we can write 
\begin{align*}
f_{q, (\mathbf{n},x,y)}(1)&=h_1(\mathbf{n})+(q+1)x+y \qquad\text{ and}\\
f'_{q, (\mathbf{n},x,y)}(1)&=h_2(\mathbf{n})+ [g(q+1)+1-q]x + gy
\end{align*}
which determinant is $q-1 \pmod{\ell^2}$.\\
When $\ell | q-1$, we have
\begin{align*}
f_{q,\mathbf{a}}(1)&\equiv 2+2a_1 + \dots + 2a_{g-1} + a_g &\pmod{\ell}\\
f'_{q,\mathbf{a}}(1)&\equiv g(2+2a_1 + \dots + 2a_{g-1} + a_g) \equiv g f_{q, \mathbf{a}}(1) &\pmod{\ell}
\end{align*}
\ie, $f' \equiv gf \pmod{\ell}$, which means $f'-gf\equiv i\ell \pmod{\ell^2}$ for some $i$, thus, if $\mathbf{a}$ is a solution for the first equation, $f'\equiv i\ell \pmod{\ell^2}$, which means $f'\equiv 0\pmod{\ell}$. Now, for every $\mathbf{n}\in (\mathbb{Z}/\ell^2\mathbb{Z})^{g-1}$, we have exactly one solution for the first equation, and which is automatically a solution for the second equation by the previous explanation. Thus, the result follows.
\end{proof}

\section*{Acknowledgement}
I would like to thank my advisor Serge Vl\u{a}du\c{t} for very fruitful discussions and for his very useful comments and suggestions.

\bibliography{ajgiangreco-LCICAVFF}
\bibliographystyle{siam}

\end{document}